\documentclass[12pt]{article}

\usepackage[usenames]{color} %used for font color

\usepackage{amssymb} %maths
\usepackage{amsmath} %maths
\usepackage{amssymb,bm,amsmath,amssymb,amsthm}
\usepackage[]{inputenc}
\usepackage[small,nohug,heads=vee]{diagrams}
\diagramstyle[labelstyle=\scriptstyle]

\usepackage{authblk}

\textwidth 450pt
\oddsidemargin= 10pt

\usepackage[all]{xy}
  \SelectTips{cm}{10}     % Use the arrowheads that agree with inline arrows.
  \everyxy={<2.5em,0em>:} % Set the unit length in xy diagrams.

\usepackage{hyperref}
%%For the Internet Links

\DeclareMathOperator{\Gal}{Gal}

\DeclareMathOperator{\CL}{Cl}
\DeclareMathOperator{\tor}{tors}

\DeclareMathOperator{\Hom}{Hom}

\DeclareMathOperator{\T}{\mathbb T}

\newtheorem{cor}{Corollary}

\newtheorem{lem}{Lemma}

\newtheorem{theorem}{Theorem}
\newcommand{\overbar}[1]{\mkern 1.5mu\overline{\mkern-1.5mu#1\mkern-1.5mu}\mkern 1.5mu}

\begin{document}
\title{A remark On Abelianized Absolute Galois Group of Imaginary Quadratic Fields}
\author[*]{Bart de Smit \\ 
\texttt{desmit@math.leidenuniv.nl}
}

\author[*]{\\Pavel Solomatin \\
\texttt{p.solomatin@math.leidenuniv.nl}
}

\affil[*]{Leiden University, Mathematical Department\\
Niels Bohrweg 1, 2333 CA Leiden}

\date{ Leiden, 2017}
\maketitle

\begin{abstract}
The main purpose of this paper is to extend results from \cite{Peter1} on isomorphism types of the abelianized absolute Galois group $\mathcal G_K^{ab}$, where $K$ denotes imaginary quadratic field. In particular, we will show that if the class number $h_K$ of an imaginary quadratic field $K$ different from $\mathbb Q(i)$, $\mathbb Q(\sqrt{-2})$ is a fixed prime number $p$ then there are only two isomorphism types  of $\mathcal G_K^{ab}$ which could occur. For instance, this result implies that imaginary quadratic fields of the discriminant $D_K$ belonging to the set $\{-35, -51, -91, -115, -123, -187, -235,$ $ -267,-403, -427 \}$ all have isomorphic abelian parts of their absolute Galois groups.

\end{abstract}

\textbf{\\ \\ \\ \\ \\ Acknowledgements:} 
The paper is a part of the PhD research of the second author under scientific direction of the first author. The second author was supported by the ALGANT scholarship during the research. Both authors would like to thank professors Hendrick Lenstra and Peter Stevenhagen for helpful discussions during the project.

\newpage
\section{Introduction}
\subsection{Motivation}
Let $K$ be a global field, i.e. either a finite extension of the field $\mathbb Q$ of rational numbers or a field of functions on a smooth projective geometrically connected curve $X$ over a finite field $k$. In the first case $K$ is a number field and in the later case $K$ is a global function field. An interesting question to ask is: what kind of information about $K$ one could recover from various subgroups of the absolute Galois group $\mathcal G_K = \Gal(K^{sep} : K)$ associated to $K$ ? The famous theorem on Neukirch and Uchida states that the isomorphism types of $\mathcal G_K$ considered as topological group determines the isomorphism class\footnote{up to Frobenius twist in the case of function fields.} of the field $K$. A question concerning the abelian part $\mathcal G_K^{ab}$ of $\mathcal G_K$ has attracted much attention since the work \cite{Onabe1} where in particular it was shown that there exists an example of imaginary quadratic fields with different class-groups and with isomorphic $\mathcal G_K^{ab}$. A dramatic improvement was achieved in the paper \cite{Peter1}, where authors produced a lot of new examples of non-isomorphic imaginary quadratic fields which share the same isomorphism type of $\mathcal G_K^{ab}$ and also showed that there are infinitely many isomorphism types of pro-finite groups which could occur as $\mathcal G_K^{ab}$. Moreover, based on their computations they made a conjecture that there are infinitely many imaginary quadratic fields with $\mathcal G_K^{ab} \simeq \widehat{\mathbb Z}^2 \times \prod_{n\in \mathbb N} \mathbb Z/n\mathbb Z $, where $\widehat{\mathbb Z}$ denotes the group of pro-finite integers.

Motivated by the above results authors of the present paper started working on the question about isomorphism type of $\mathcal G_K^{ab}$ where $K$ denotes a global function field.  For a global function field $K$ of characteristic $p$ with the exact constant field $\mathbb F_{q}$, $q=p^n$ we defined the invariant $d_K$ as a natural number such that  $n = p^{k} d_{K}$ with $\gcd(d_{K}, p)=1$. Let $\CL^{0}(K)$ denotes the degree zero part of the class-group of $K$. In other words $\CL^{0}(K)$ is the abelian group of $\mathbb F_q$-rational points of the Jacobian variety associated to the curve $X$.  In the pre-print \cite{FFAb} we proved the following result:
\begin{theorem}\label{main}
Suppose $K$ and $K'$ are two global function fields, then $\mathcal G^{ab}_K \simeq \mathcal G^{ab}_{K'}$ as pro-finite groups if and only if the following three conditions hold:
\begin{enumerate}
	\item $K$ and $K'$ share the same characteristic $p$;
	\item Invariants $d_K$ and $d_{K'}$ coincide: $d_K = d_{K'}$; 
	\item The non $p$-parts of class-groups of $K$ and $K'$ are isomorphic: $$\CL^{0}_{non-p} (K) \simeq \CL^{0}_{non-p} (K').$$
\end{enumerate}
In particular, two function fields with the same exact constant filed $\mathbb F_q$ have isomorphic $\mathcal G^{ab}_{K}$ if and only if they have isomorphic $\CL^{0}_{non-p} (K)$.
\end{theorem}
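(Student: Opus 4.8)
The plan is to compute $\mathcal G_K^{ab}$ outright from class field theory and then read off the three invariants; at the technical level I will pass to Pontryagin duals, since $\mathcal G_K^{ab}$ has only countably many open subgroups and its dual is therefore a countable discrete torsion abelian group, to which the Ulm--Kaplansky classification applies. By Artin reciprocity $\mathcal G_K^{ab}$ is the profinite completion of the idele class group $C_K=\mathbb A_K^*/K^*$. The degree map gives $1\to C_K^0\to C_K\xrightarrow{\deg}\mathbb Z\to 0$, which splits because $K$ has a divisor of degree $1$ (F.~K.~Schmidt); as $C_K^0$ is compact this completes to $\mathcal G_K^{ab}\cong C_K^0\times\widehat{\mathbb Z}$, and $C_K^0$ itself sits in $1\to\prod_v\mathcal O_v^*/\mathbb F_q^*\to C_K^0\to\CL^0(K)\to 1$.

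Next I would split every group into its pro-$p$ and prime-to-$p$ parts, and treat the pro-$p$ part first. Writing $\mathcal O_v^*\cong\mu_{q^{\deg v}-1}\times(1+\mathfrak m_v)$ with each $1+\mathfrak m_v$ a pro-$p$ group isomorphic to a countable product of copies of $\mathbb Z_p$, the pro-$p$ part of $\prod_v\mathcal O_v^*$ is again a countable product of copies of $\mathbb Z_p$. In characteristic $p$ this group absorbs the finite $p$-group $\CL^0(K)_p$ as well as the factor $\mathbb Z_p$ coming from $\widehat{\mathbb Z}$, so the pro-$p$ part of $\mathcal G_K^{ab}$ is one and the same group for every $K$. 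This is precisely why only the prime-to-$p$ part of the class group can enter the classification, and it reduces everything to the prime-to-$p$ part.

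For the prime-to-$p$ part the dual of $\prod_v\mu_{q^{\deg v}-1}$ is $\bigoplus_v\mathbb Z/(q^{\deg v}-1)$. Fixing $\ell\neq p$ and putting $d_\ell=\ord_\ell(q)$ and $c_\ell=v_\ell(q^{d_\ell}-1)$, the lifting-the-exponent lemma shows that a place $v$ with $d_\ell\mid\deg v$ contributes a cyclic group of order $\ell^{\,c_\ell+v_\ell(\deg v/d_\ell)}$ and the remaining places contribute nothing. Because $K$ has places of every sufficiently large degree, for each $k\ge c_\ell$ there are infinitely many contributions of order exactly $\ell^k$, and none below the floor $c_\ell$; hence the $\ell$-part of the dual is $\bigoplus_{k\ge c_\ell}(\mathbb Z/\ell^k)^{(\aleph_0)}$, whose isomorphism type depends only on $c_\ell$ and is completely insensitive to the point-counts of the individual curve. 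A second application of lifting-the-exponent shows that $c_\ell$ is unchanged when $q$ is replaced by $q^p$; since that substitution is exactly what relates two constant fields with the same $d_K$, the family $(c_\ell)_\ell$, and with it the whole prime-to-$p$ unit part, depends only on $p$ and $d_K$. This yields condition~(2).

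Finally I would fold in the diagonal quotient by $\mu_{q-1}$ and the extension by $\CL^0(K)$. Dually the quotient becomes the kernel of a summation map into $\mathbb Z/(q-1)$, a finite-index change that preserves every floor $c_\ell$ and every infinite multiplicity, while $\CL^0_{non-p}(K)$ enters as a finite subgroup of $\widehat{C_K^0}$ having the unit dual as quotient. The heart of the theorem, and the step I expect to be the main obstacle, is to show that this finite subgroup is \emph{recoverable} from the abstract isomorphism type: a finite subgroup can shift any Ulm--Kaplansky invariant only by a bounded amount, so $\CL^0_{non-p}(K)$ is invisible at the already-saturated levels $k\ge c_\ell$ and can be detected only through the finitely many invariants below the floors; proving that it is faithfully reflected there, and conversely that the triple $(p,d_K,\CL^0_{non-p}(K))$ reconstructs the group, is the delicate part and is exactly where the function-field picture diverges from the class-group \emph{absorption} seen for imaginary quadratic fields. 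Granting this, $\mathcal G_K^{ab}$ both determines and is determined by $(p,d_K,\CL^0_{non-p}(K))$, giving the two implications. The closing assertion is then immediate, since two fields with the same exact constant field $\mathbb F_q$ automatically satisfy conditions~(1) and~(2), leaving only condition~(3).
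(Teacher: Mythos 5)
Your strategy---computing $\mathcal G_K^{ab}\cong C_K^0\times\widehat{\mathbb Z}$ from class field theory, splitting off the pro-$p$ part, and passing to Pontryagin duals so as to land in the classification theory of countable torsion abelian groups---is essentially the skeleton of the proof the paper outlines and carries out in \cite{FFAb}, and your computation of the prime-to-$p$ unit part matches the paper's description of $\mathcal T_K$ in terms of $(p,d_K)$. But the proposal has a genuine gap, and you locate it yourself: everything after ``Granting this'' is the actual content of the theorem. Two things are left unproved: (i) that $\CL^{0}_{non-p}(K)$ is recoverable from the abstract topological group $\mathcal G_K^{ab}$, which is what makes condition (3) \emph{necessary}; and (ii) that the extension $0\to\mathcal T_K\to\mathcal D_K\to\CL^{0}_{non-p}(K)\to 0$ has a unique isomorphism type subject to all torsion of $\mathcal D_K$ lying in $\mathcal T_K$, which is what makes the three conditions \emph{sufficient}. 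The paper's mechanism for (i) is the intrinsic identification $\CL^{0}_{non-p}(K)\simeq(\mathcal G_K^{ab}/\overbar{\mathcal G_K^{ab}[\tor]})[\tor]$, dually $A^{\vee}=\bigcap_n nB$ (the first Ulm subgroup); for (ii) it is Theorem \ref{group}, proved via the duality argument reproduced in Section 2 here. Moreover, your heuristic that the class group ``can be detected only through the finitely many invariants below the floors'' points in the wrong direction: when $c_\ell=1$ there are no finite-level invariants below the floor, yet a nontrivial $\ell$-part of the class group must still be, and is, detected---through the transfinite Ulm invariants, i.e.\ through $\bigcap_n\ell^n B$. Without (i) and (ii) neither implication of the equivalence is established.

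Two secondary problems. First, the lifting-the-exponent formula $\ell^{\,c_\ell+v_\ell(\deg v/d_\ell)}$ fails for $\ell=2$ when $q\equiv 3\pmod 4$: for $q=3$ one has $v_2(3^m-1)=1$ for $m$ odd and $v_2(3^m-1)=2+v_2(m)$ for $m$ even, so $\mathbb Z/4$ never occurs and the $2$-part is not of the form $\bigoplus_{k\ge c_2}(\mathbb Z/2^k)^{(\aleph_0)}$; the conclusion that the answer depends only on $p$ and $d_K$ survives, but the case $\ell=2$ must be treated separately. Second, the claim that the pro-$p$ unit part ``absorbs'' $\CL^{0}(K)_p$ needs an argument: an extension of a finite $p$-group by $\mathbb Z_p^{\mathbb N}$ can a priori acquire torsion, so one must first show that the pro-$p$ part of $\mathcal G_K^{ab}$ is torsion-free before identifying it with a finite-index (hence isomorphic) subgroup of $\mathbb Z_p^{\mathbb N}$.
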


An important remark is our proof actually provides \emph{a description of isomorphism type of $\mathcal G_{K}^{ab}$}. Let $\mathcal T_K$ denotes the topological closure of the torsion of $\mathcal G^{ab}_K$. We showed that the isomorphism type of $\mathcal T_K$ depends only on the cardinality $q$ of the constant field of $K$ and actually gave an explicit description of the group $\mathcal T_K$ in terms of $q$ only. We also constructed an isomorphism of abelian groups: $$\CL^{0}_{non-p} (K) \simeq (\mathcal G^{ab}_K/\mathcal T_K)[\tor].$$ In the proof of the theorem \ref{main} we showed that the isomorphism type of $\mathcal G_{K}^{ab}$ is determined by isomorphism types of these two groups: $\mathcal T_q$ and $\CL^{0}_{non-p} (K) $. The main step is to prove that given groups $\mathcal T_K $ and $\CL^{0}_{non-p} (K)$ there exists a unique isomorphism type of a pro-finite abelian group $\mathcal D_K$ such that  the following holds:
\begin{enumerate}
	\item There exists an exact sequence: $0 \to \mathcal T_K \to \mathcal D_K \to \CL^{0}_{non-p} (K) \to 0 $;
	\item All torsion elements of $\mathcal D_K$ are in $\mathcal T_K$. 
\end{enumerate}

Finally, combining all results of our paper one has:

\begin{cor}
For a global function field $K$ of characteristic $p$ there exists an isomorphism of topological groups: $$\mathcal G_K^{ab} \simeq \widehat{\mathbb Z} \times \mathbb Z_p^{\infty} \times \mathcal D_K .$$
\end{cor}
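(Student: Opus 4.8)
The plan is to realise each of the three factors explicitly through global class field theory and then to match the leftover factor with $\mathcal D_K$ by means of the uniqueness statement recalled above. By the reciprocity law the Artin map identifies $\mathcal G_K^{ab}$ with the profinite completion $\widehat{C_K}$ of the idele class group $C_K = \mathbb A_K^*/K^*$. The degree map gives the exact sequence $0 \to C_K^0 \to C_K \xrightarrow{\deg} \mathbb Z \to 0$, where the degree-zero part $C_K^0$ is compact and totally disconnected, hence already profinite; passing to completions yields $0 \to C_K^0 \to \mathcal G_K^{ab} \to \widehat{\mathbb Z} \to 0$, with $\widehat{\mathbb Z} = \Gal(\overline{\mathbb F_q}K/K)$ the Galois group of the constant field extension. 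Since $\widehat{\mathbb Z}$ is a free profinite group, this surjection admits a continuous homomorphic section; as all groups involved are abelian the section splits off a direct factor, giving $\mathcal G_K^{ab} \simeq \widehat{\mathbb Z} \times C_K^0$.

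Next I would extract the free pro-$p$ part. A profinite abelian group is canonically the product of its pro-$\ell$ parts over all primes $\ell$, so with no extra work $C_K^0 \simeq (C_K^0)_p \times (C_K^0)_{non-p}$, where $(C_K^0)_{non-p} = \prod_{\ell \neq p}(C_K^0)_\ell$. From the standard filtration $0 \to \overline U \to C_K^0 \to \CL^{0}(K) \to 0$ with $\overline U = \left(\prod_v \mathcal O_v^*\right)/\mathbb F_q^*$ and $\mathcal O_v^* \simeq \mathbb F_{q_v}^* \times (1+\mathfrak m_v)$, the $p$-part of $\overline U$ is governed by the principal units: each $1+\mathfrak m_v \simeq \mathbb Z_p^\infty$ while $|\mathbb F_{q_v}^*|$ is prime to $p$, so $(\overline U)_p \simeq \mathbb Z_p^\infty$ is torsion-free. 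The crux of this step is to check that the whole $(C_K^0)_p$ is torsion-free, i.e. that the finite $p$-group $\CL^{0}(K)_p$ produces no new torsion; granting this, Pontryagin duality turns the extension of a finite group by $\mathbb Z_p^\infty$ into an extension of $\bigoplus_{\mathbb N}\mathbb Q_p/\mathbb Z_p$ by a finite group with divisible total space, forcing $(C_K^0)_p \simeq \mathbb Z_p^\infty$. This is the second factor.

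It remains to identify $\mathcal D := (C_K^0)_{non-p}$ with $\mathcal D_K$. Taking non-$p$ parts in the filtration above gives $0 \to (\overline U)_{non-p} \to \mathcal D \to \CL^{0}_{non-p}(K) \to 0$, and $(\overline U)_{non-p} = \left(\prod_v \mathbb F_{q_v}^*\right)/\mathbb F_q^*$ is exactly the torsion closure $\mathcal T_K$ whose isomorphism type is described in the proof of Theorem \ref{main}. Because $(C_K^0)_p$ is torsion-free, every torsion element of $C_K^0$, hence of $\mathcal D$, already lies in $\mathcal T_K$. Thus $\mathcal D$ realises the two characterising properties of $\mathcal D_K$, so by the uniqueness of that isomorphism type $\mathcal D \simeq \mathcal D_K$, and combining the three displays gives $\mathcal G_K^{ab} \simeq \widehat{\mathbb Z} \times \mathbb Z_p^\infty \times \mathcal D_K$.

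I expect the formal parts — the Sylow decomposition and the splitting of the degree sequence — to be immediate, so the real obstacle sits in two places: proving that the full pro-$p$ part $(C_K^0)_p$ is torsion-free, and verifying that $(\overline U)_{non-p}$ coincides with the abstract torsion closure $\mathcal T_K$ supplied by Theorem \ref{main}. Both are effectively contained in the analysis carried out for Theorem \ref{main}, so at this stage the task is largely the bookkeeping of matching each concrete factor to its abstract counterpart and invoking the uniqueness of $\mathcal D_K$.
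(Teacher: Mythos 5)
Your proposal is correct and follows essentially the same route as the paper: the corollary is stated there as a summary of the results of \cite{FFAb}, whose strategy (class field theory to split off $\widehat{\mathbb Z}$, the unit filtration to isolate the torsion-free pro-$p$ part $\mathbb Z_p^{\infty}$ and the torsion closure $\mathcal T_K$, and then the uniqueness theorem for $\mathcal D_K$) is exactly the one you outline. The two points you flag as the real work --- torsion-freeness of the full pro-$p$ part and the identification of $(\overline U)_{non-p}$ with $\mathcal T_K$ --- are indeed the substantive inputs, and they are precisely the results of \cite{FFAb} that the corollary is entitled to invoke.
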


Our result in the imaginary quadratic field case is quite similar to this statement. 

\subsection{The Statement of the Theorem}
The main purpose of the present paper is to use technique from \cite{FFAb} in order to extend results of the paper \cite{Peter1}. Let $K$ be an imaginary quadratic field different from $\mathbb Q(i)$, $\mathbb Q(\sqrt{-2})$. Let $\mathcal T = \prod_n \mathbb Z/n\mathbb Z$ and let $\CL(K)$ denotes the ideal class group of $K$. Summarising results of \cite{Peter1} we have: 
\begin{theorem}\label{Pet1}
In the above settings the following holds:
	\begin{enumerate}
		\item There exists an exact sequence of topological groups: $0\to \widehat{\mathbb Z}^2 \times \mathcal T \to \mathcal G_K^{ab} \to~\CL(K)~\to~0$;
		\item The topological closure $ \overbar{ \mathcal G_{K}^{ab}[\tor]}$ of the torsion subgroup of $\mathcal G_K^{ab}$ is $\mathcal T$;
		\item The torsion subgroup of the quotient $  \mathcal G_{K}^{ab} / \mathcal T$ is trivial if and only if  $\mathcal G_{K}^{ab} \simeq  \widehat{\mathbb Z}^2 \times \mathcal T$;
		\item There exist an injective map from $(\mathcal G_{K}^{ab} / \mathcal T)[\tor]$ to $\CL(K)$ and an algorithm with input $K$ and output whether the group $(\mathcal G_{K}^{ab} / \mathcal T)[\tor]$ is trivial or not.  
	\end{enumerate}
\end{theorem}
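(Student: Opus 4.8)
\section*{Proof proposal}

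The plan is to realise $\mathcal G_K^{ab}$ through class field theory and then read off all four assertions from the structure of the idele class group. By the Artin reciprocity map one has $\mathcal G_K^{ab} \cong C_K/D_K$, where $C_K = \mathbb A_K^*/K^*$ is the idele class group and $D_K$ its connected component of the identity. Since $K$ is imaginary quadratic it has a single archimedean place, with $K_\infty^* = \mathbb C^*$ connected; its image therefore lies in $D_K$ and dies in the quotient. First I would use this to reduce to the finite ideles, obtaining $\mathcal G_K^{ab} \cong \mathbb A_{K,f}^*/\overbar{K^*}$. Filtering $\mathbb A_{K,f}^*$ by the compact subgroup of unit ideles $U = \prod_v \mathcal O_v^*$, and using that $\mathbb A_{K,f}^*/U$ is the group of fractional ideals while $K^*$ maps onto the principal ones, I obtain the exact sequence $0 \to U/\overbar{\mathcal O_K^*} \to \mathcal G_K^{ab} \to \CL(K) \to 0$, where $\overbar{\mathcal O_K^*}$ is the closure of the global units; as $K \neq \mathbb Q(i), \mathbb Q(\sqrt{-2})$ this is a finite group.

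The decisive step, and the one I expect to be the main obstacle, is the identification $U/\overbar{\mathcal O_K^*} \cong \widehat{\mathbb Z}^2 \times \mathcal T$, which yields assertion (1). I would decompose each local unit group as $\mathcal O_v^* \cong \mu(K_v) \times \mathbb Z_\ell^{[K_v:\mathbb Q_\ell]}$ for $v \mid \ell$. Summing local degrees over $v \mid \ell$ gives $[K:\mathbb Q]=2$, so the torsion-free parts assemble into $\prod_\ell \mathbb Z_\ell^2 = \widehat{\mathbb Z}^2$. The serious content is that the torsion part $\prod_v \mu(K_v)$, after dividing by the diagonal image of $\overbar{\mathcal O_K^*}$, is isomorphic to the universal group $\mathcal T = \prod_n \mathbb Z/n\mathbb Z$ independently of $K$. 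To see this I would count, for each prime power $m$, the places $v$ with $\mu_m \subset K_v$: by Chebotarev applied to the cyclotomic extensions $K(\zeta_m)/K$ there are infinitely many such $v$, so every finite cyclic group occurs as a factor with countably infinite multiplicity, which pins down the isomorphism type as $\mathcal T$. The exclusion of $\mathbb Q(i)$ and $\mathbb Q(\sqrt{-2})$ enters precisely here, to control the anomalous contribution at the prime $2$ together with the quotient by the global units; this local bookkeeping at $2$ is the technical heart of the argument.

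Granting (1), the remaining assertions are formal manipulations of profinite abelian groups. For (2), the torsion of $\mathcal T$ is dense in $\mathcal T$, so $\mathcal T \subseteq \overbar{\mathcal G_K^{ab}[\tor]}$; conversely the image of the torsion in the quotient $\CL(K)$ is finite, so the closure of the torsion is an extension of a finite group $F$ by $\mathcal T$. Since $\mathcal T$ contains each finite cyclic group as a factor with infinite multiplicity, one has the absorption isomorphism $\mathcal T \times F \cong \mathcal T$, whence $\overbar{\mathcal G_K^{ab}[\tor]} \cong \mathcal T$. For (4), quotienting (1) by $\mathcal T$ gives $0 \to \widehat{\mathbb Z}^2 \to \mathcal G_K^{ab}/\mathcal T \to \CL(K) \to 0$; because $\widehat{\mathbb Z}^2$ is torsion-free, the torsion subgroup of the middle term injects into $\CL(K)$, which is the desired injection, and since $\CL(K)$ is finite the triviality of $(\mathcal G_K^{ab}/\mathcal T)[\tor]$ is decided by finitely many splitting conditions computable from ray class data. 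Finally (3): if $(\mathcal G_K^{ab}/\mathcal T)[\tor]$ is trivial, then $\mathcal G_K^{ab}/\mathcal T$ is a torsion-free profinite abelian group containing $\widehat{\mathbb Z}^2$ with finite index, hence $\cong \widehat{\mathbb Z}^2$; as $\widehat{\mathbb Z}^2$ is free, and so projective, the extension $0 \to \mathcal T \to \mathcal G_K^{ab} \to \widehat{\mathbb Z}^2 \to 0$ splits, giving $\mathcal G_K^{ab} \cong \widehat{\mathbb Z}^2 \times \mathcal T$, while the converse is immediate by reading off the torsion-free quotient.
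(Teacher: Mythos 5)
You should first note that the paper does not prove this theorem at all: its ``proof'' is a citation of theorems 3.5, 4.4 and 5.1 of \cite{Peter1}, and your outline is essentially a reconstruction of the strategy of that cited paper (Artin map, kill the connected component, filter the finite ideles by $U=\prod_v\mathcal O_v^*$, identify the inertial part with $\widehat{\mathbb Z}^2\times\mathcal T$). So the route is the right one; the problem is that the two places where the actual content sits are not carried out. First, the isomorphism $U/\overbar{\mathcal O_K^*}\simeq\widehat{\mathbb Z}^2\times\mathcal T$ is asserted rather than proved --- you yourself call it ``the technical heart'' and defer it --- and the argument you do sketch is not sufficient even in outline: Chebotarev applied to $K(\zeta_{l^k})/K$ gives infinitely many $v$ with $\mu_{l^k}\subseteq K_v$, i.e.\ a lower bound on the local root-of-unity group, but the isomorphism type of $\prod_v\mu(K_v)$ (equivalently of its Pontryagin dual, a countable direct sum of cyclic groups, which is classified by the number of summands of each \emph{exact} order $l^k$) requires infinitely many $v$ whose $l$-part of $\mu(K_v)$ is \emph{exactly} $\mathbb Z/l^k\mathbb Z$ for every pair $(l,k)$, and after that one must still control the quotient by the diagonally embedded $\mu_K$. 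This is exactly the point where $\mathbb Q(i)$ and $\mathbb Q(\sqrt{-2})$ fail and where the work in \cite{Peter1} actually happens.

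Second, and more seriously for the present paper, assertion (2) says that $\overbar{\mathcal G_K^{ab}[\tor]}$ \emph{equals} the subgroup $\mathcal T$ of assertion (1); the later sections rely on this equality of subgroups (they quotient by $\mathcal T$ and deduce that $\mathcal D_l$ has no torsion outside $\mathcal T_l$). Your argument only shows that $\overbar{\mathcal G_K^{ab}[\tor]}$ contains $\mathcal T$ with finite quotient $F\subseteq\CL(K)$, and then invokes an ``absorption isomorphism'' $\mathcal T\times F\simeq\mathcal T$. This both presupposes that the extension of $F$ by $\mathcal T$ splits (which is precisely what cannot be assumed --- the whole point of the paper is that the analogous extension $0\to\mathcal T\to\mathcal D_K\to\CL^{split}(K)\to 0$ is generally non-split) and, even granting it, yields only an abstract isomorphism with $\mathcal T$, not the statement $F=0$. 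Whether a torsion element of $\mathcal G_K^{ab}$ can map to a nontrivial ideal class is an arithmetic property of the specific extension, not a formal consequence of the exact sequence in (1), and it is one of the cited results of \cite{Peter1}. (Your treatment of (3) is fine, and the injection in (4) is correct; the ``algorithm'' in (4) is likewise only gestured at, whereas \cite{Peter1} gives an explicit finite criterion.)
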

\begin{proof}
See theorem 3.5, 4.4 and 5.1 from \cite{Peter1}.
\end{proof}

Let us call the image of $(\mathcal G_{K}^{ab} / \mathcal T)[\tor]$ in $\CL(K)$ as $\CL^{split}(K)$. Roughly speaking our main result states that isomorphism type of $\mathcal G_K^{ab}$ is uniquely determined by the isomorphism type of $\CL^{split}(K)$. More concretely, we will prove that given groups $\mathcal T$ and $\CL^{split}(K)$ there exists a unique isomorphism type of a pro-finite abelian group $\mathcal D_K$ such that  the following holds:
\begin{enumerate}
	\item There exists an exact sequence: $0 \to \mathcal T \to \mathcal D_K \to \CL^{split}(K) \to 0 $;
	\item All torsion elements of $\mathcal D_K$ are in $\mathcal T$. 
\end{enumerate}

Then the main result of the present paper could be stated as:

\begin{theorem}\label{MainG1}
Let $K$ be an imaginary quadratic field different from $\mathbb Q(i)$, $\mathbb Q(\sqrt{-2})$. There exists an isomorphism of topological groups $\mathcal G_K^{ab} \simeq \mathcal D_K \times \widehat{\mathbb Z}^2$. 
\end{theorem}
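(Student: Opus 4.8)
The plan is to derive Theorem \ref{MainG1} from the package assembled in Theorem \ref{Pet1} by peeling off the free factor $\widehat{\mathbb{Z}}^2$ using a projectivity argument. Write $G = \mathcal{G}_K^{ab}$ and $C = \CL(K)$ (finite), so that part (1) of Theorem \ref{Pet1} reads $0 \to \widehat{\mathbb{Z}}^2 \times \mathcal{T} \to G \to C \to 0$. First I would quotient by the closed subgroup $\mathcal{T}$, which lies inside $\widehat{\mathbb{Z}}^2 \times \mathcal{T}$; this yields a short exact sequence $0 \to \widehat{\mathbb{Z}}^2 \to G/\mathcal{T} \to C \to 0$, exhibiting $G/\mathcal{T}$ as a topologically finitely generated profinite $\widehat{\mathbb{Z}}$-module (generated by the two generators of $\widehat{\mathbb{Z}}^2$ and lifts of generators of $C$). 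By the structure theory of such modules over $\widehat{\mathbb{Z}} \cong \prod_p \mathbb{Z}_p$, applied prime by prime, and since the finiteness of $C$ forces the free rank to be exactly $2$, one gets a decomposition $G/\mathcal{T} \cong \widehat{\mathbb{Z}}^2 \times S$ with $S = (G/\mathcal{T})[\tor]$ its torsion subgroup. By part (4) of Theorem \ref{Pet1} the injection $(G/\mathcal{T})[\tor] \hookrightarrow \CL(K)$ identifies $S$ with $\CL^{split}(K)$.

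Next I would split off the free part intrinsically. The canonical projection onto the torsion-free quotient gives a continuous surjection $q \colon G \twoheadrightarrow (G/\mathcal{T})/S \cong \widehat{\mathbb{Z}}^2$. The key point is that $\widehat{\mathbb{Z}}$ is projective in the category of profinite abelian groups: for any profinite abelian $A$, evaluation at the canonical generator gives $\Hom_{\mathrm{cont}}(\widehat{\mathbb{Z}}, A) \cong A$, so every continuous surjection onto $\widehat{\mathbb{Z}}$, and hence onto $\widehat{\mathbb{Z}}^2$, admits a continuous section. Splitting $q$ produces an isomorphism $G \cong \widehat{\mathbb{Z}}^2 \times \mathcal{D}$, where I set $\mathcal{D} = \ker q$.

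It then remains to identify $\mathcal{D}$ with $\mathcal{D}_K$. By construction $\mathcal{T} \subseteq \mathcal{D}$, since $\mathcal{T}$ maps to $0$ in $\widehat{\mathbb{Z}}^2$, and $\mathcal{D}/\mathcal{T} = \ker((G/\mathcal{T}) \to (G/\mathcal{T})/S) = S \cong \CL^{split}(K)$, which is condition (1) in the definition of $\mathcal{D}_K$. For condition (2) I would use that $\mathcal{D}[\tor] \subseteq G[\tor] \subseteq \overline{G[\tor]} = \mathcal{T}$, the last equality being part (2) of Theorem \ref{Pet1}; together with $\mathcal{T} \subseteq \mathcal{D}$ this shows that every torsion element of $\mathcal{D}$ already lies in $\mathcal{T}$. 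Thus $\mathcal{D}$ satisfies both defining properties, and by the companion uniqueness statement for the isomorphism type of $\mathcal{D}_K$ we conclude $\mathcal{D} \cong \mathcal{D}_K$, whence $\mathcal{G}_K^{ab} \cong \mathcal{D}_K \times \widehat{\mathbb{Z}}^2$. The exclusion of $\mathbb{Q}(i)$ and $\mathbb{Q}(\sqrt{-2})$ is inherited directly from Theorem \ref{Pet1}.

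The splitting itself is comparatively routine once Theorem \ref{Pet1} is available; the genuine obstacle is the uniqueness of $\mathcal{D}_K$, i.e.\ that conditions (1) and (2) pin down a single isomorphism type. I expect this to require an $\mathrm{Ext}$-computation carried out prime by prime: one analyzes $\mathrm{Ext}^1(\CL^{split}(K), \mathcal{T})$ and shows that the largeness of $\mathcal{T}$ — its $\ell$-part containing infinitely many cyclic factors of every order $\ell^k$ — forces all extension classes compatible with the torsion condition (2) to give isomorphic groups. That classification, rather than the projectivity argument above, is where the real work of the paper lies.
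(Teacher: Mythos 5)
Your splitting argument is correct and is essentially the one the paper gives: the paper carries it out one prime $l$ at a time (decomposing $B_l=\mathcal G_{K,l}^{ab}/\mathcal T_l$ as $B_l[\tor]\oplus B_l'$ with $B_l'\simeq\mathbb Z_l^2$, then splitting off $\mathbb Z_l^2$ by projectivity), whereas you work with $\widehat{\mathbb Z}^2$ globally; the two are equivalent since a profinite abelian group is the product of its primary components. Your verification that $\mathcal D=\ker q$ satisfies the two defining conditions of $\mathcal D_K$ is also fine.

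The gap is the uniqueness of the isomorphism type of $\mathcal D_K$, which you invoke as ``the companion uniqueness statement'' but do not prove. This is not an optional add-on: the symbol $\mathcal D_K$ in Theorem \ref{MainG1} is only defined as \emph{the unique} isomorphism type satisfying conditions (1) and (2), so without uniqueness your argument only shows that \emph{some} complement to $\widehat{\mathbb Z}^2$ with these properties exists, and the paper devotes the entire second half of its proof (Theorem \ref{progroup}) to exactly this point. Moreover, your guess at how uniqueness is established --- an analysis of $\mathrm{Ext}^1(\CL^{split}(K),\mathcal T)$ --- is not the paper's route and would need substantial extra input: $\mathrm{Ext}^1$ classifies extensions up to equivalence rather than middle groups up to isomorphism, and the relevant group is far from trivial (already $\mathrm{Ext}^1(\mathbb Z/l\mathbb Z,\prod_k\mathbb Z/l^k\mathbb Z)\simeq\prod_k\mathbb Z/l\mathbb Z$ is infinite), so one would still have to show that every class compatible with the torsion condition (2) yields an isomorphic group. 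What the paper actually does is apply Pontryagin duality to $0\to\mathcal T_l\to\mathcal D_l\to A\to 0$, prove the key lemma that $A^{\vee}$ coincides with the subgroup $\cap_n l^n(\mathcal D_l)^{\vee}$ of divisible elements of the discrete torsion group $(\mathcal D_l)^{\vee}$, and then invoke a structure theorem (Theorem \ref{group}, imported from \cite{FFAb}) for countable torsion $l$-groups whose divisible part is a fixed finite group with quotient a direct sum of cyclic groups of unbounded orders. That duality reduction, not an Ext computation, is where condition (2) is actually used, and it is the step your proposal leaves open.
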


The above theorem extends results of the theorem \ref{Pet1} as follows: 

\begin{cor}
For a fixed prime number $p$ and an imaginary quadratic field $K$ with class-number $h_K = p$ there are only two isomorphism types of $\mathcal G_K^{ab}$ which could occur: either $\CL^{split}(K) =0$ or $\CL^{split}(K) \simeq \mathbb Z/p\mathbb Z$. In particular, it was shown in \cite{Peter1} that imaginary quadratic fields with the discriminant $D_K$ occurring in the list $\{-35, -51, -91, -115,$  $-123, -187,-235, -267,-403, -427 \}$ all have class-number 2 and have non-trivial $\CL^{split}(K)$, therefore they all share the same isomorphism class of $\mathcal G_K^{ab}$.  
\end{cor}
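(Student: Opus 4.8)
The plan is to read off the Corollary as a direct consequence of Theorem~\ref{MainG1} together with the construction of $\mathcal D_K$. By Theorem~\ref{MainG1} we have $\mathcal G_K^{ab} \simeq \mathcal D_K \times \widehat{\mathbb Z}^2$, so the isomorphism type of $\mathcal G_K^{ab}$ is controlled entirely by that of $\mathcal D_K$; and since $\mathcal D_K$ is, by its very definition, the \emph{unique} isomorphism type of profinite abelian group fitting into $0 \to \mathcal T \to \mathcal D_K \to \CL^{split}(K) \to 0$ with all torsion lying in $\mathcal T$, the isomorphism type of $\mathcal G_K^{ab}$ depends only on the isomorphism type of the finite group $\CL^{split}(K)$. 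I would therefore first reduce the whole statement to determining the possible isomorphism types of $\CL^{split}(K)$ under the hypothesis $h_K = p$.

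For that reduction the key input is Theorem~\ref{Pet1}(4), which supplies an injection $(\mathcal G_K^{ab}/\mathcal T)[\tor] \hookrightarrow \CL(K)$ whose image is by definition $\CL^{split}(K)$. Hence $\CL^{split}(K)$ is, up to isomorphism, a subgroup of $\CL(K)$. When $h_K = p$ is prime, $\CL(K)$ is a group of prime order, hence cyclic: $\CL(K) \simeq \mathbb Z/p\mathbb Z$, whose only subgroups are $0$ and the full group. Consequently $\CL^{split}(K)$ is either trivial or isomorphic to $\mathbb Z/p\mathbb Z$, and correspondingly $\mathcal D_K$ — and with it $\mathcal G_K^{ab}$ — takes at most two isomorphism types. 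This is the entire content of the first assertion.

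I would also record why the two types are genuinely distinct, so that the count is sharp: since $\mathcal T = \overbar{\mathcal G_K^{ab}[\tor]}$ is intrinsic to $\mathcal G_K^{ab}$ by Theorem~\ref{Pet1}(2), the quotient invariant $(\mathcal G_K^{ab}/\mathcal T)[\tor] \simeq \CL^{split}(K)$ is itself an isomorphism invariant of $\mathcal G_K^{ab}$; hence the cases $\CL^{split}(K) = 0$ and $\CL^{split}(K) \simeq \mathbb Z/p\mathbb Z$ cannot yield isomorphic $\mathcal G_K^{ab}$. I do not expect any real obstacle at this stage: once Theorem~\ref{MainG1} is available the Corollary is essentially bookkeeping, the only nontrivial point being the elementary fact that a class group of prime order admits no proper nontrivial subgroup. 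The genuine work — the uniqueness of $\mathcal D_K$ and the splitting $\mathcal G_K^{ab} \simeq \mathcal D_K \times \widehat{\mathbb Z}^2$ — has already been carried out upstream.

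Finally, for the explicit list I would specialize to $p = 2$. By the computations of \cite{Peter1} each field with discriminant in $\{-35,-51,-91,-115,-123,-187,-235,-267,-403,-427\}$ has $h_K = 2$ and nonzero $\CL^{split}(K)$; the dichotomy above then forces $\CL^{split}(K) \simeq \mathbb Z/2\mathbb Z$ for every field in the list. Therefore all ten fields share the same $\mathcal D_K$, and hence the same isomorphism type of $\mathcal G_K^{ab} \simeq \mathcal D_K \times \widehat{\mathbb Z}^2$.
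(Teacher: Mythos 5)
Your proposal is correct and matches the paper's intended argument: the corollary is stated as a direct consequence of Theorem~3 (the splitting $\mathcal G_K^{ab} \simeq \mathcal D_K \times \widehat{\mathbb Z}^2$ with $\mathcal D_K$ determined by $\CL^{split}(K)$), combined with the observation that $\CL^{split}(K)$ injects into the prime-order group $\CL(K)$ and hence is either trivial or all of it. Your added remark that the two types are genuinely distinct (because $\mathcal T$ and hence $(\mathcal G_K^{ab}/\mathcal T)[\tor]$ are isomorphism invariants) is a small but welcome supplement the paper leaves implicit.
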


\section{The Proof}
Our goal in this section is to prove theorem \ref{MainG1}. We will do this in two steps. First, we will show that there exist a pro-finite group $\mathcal D_K$ and an isomorphism $\mathcal G_K^{ab} \simeq \mathcal D_K \times \widehat{\mathbb Z}^2$. Then we will show that the group $\mathcal D_K$ is uniquely determined by the isomorphism class of the abelian group $\CL^{split}(K)$, provided $K \ne \mathbb Q(i)$, $\mathbb Q(\sqrt{-2})$. 

Since each pro-finite abelian group is isomorphic to the limit of finite abelian groups, by the Chinese remainder theorem we have that it is also isomorphic to the product over prime numbers of its primary components. We will work with these components separately instead of working with the whole group. 

\subsection{Proof of Splitting}
Consider the exact sequence mentioned in the theorem \ref{Pet1}: 

\begin{equation} \label{esn} 
0\to \widehat{\mathbb Z}^2 \times \mathcal T \to \mathcal G_K^{ab} \to \CL(K) \to 0
\end{equation}

 Taking a prime number $l$ we get the following exact sequence of pro-$l$ abelian groups:

\begin{equation} \label{esnl} 
0\to {\mathbb Z_l}^2 \times \mathcal T_l \to \mathcal G_{K,l}^{ab} \to \CL_{l}(K) \to 0,
\end{equation}
where $\mathcal T_l = \prod_{k \in \mathbb N} \mathbb Z/l^{k} \mathbb Z$ and $\mathbb Z_l$ denotes the group of $l$-adic integers.
If $\CL_{l}(K)$ is the trivial group then obviously $\mathcal G_{K,l}^{ab} \simeq {\mathbb Z_l}^2 \times \mathcal T_l$. Our goal is to describe the isomorphism type of $\mathcal G_{K,l}^{ab} $ in the case where $\CL_{l}(K)$ is not the trivial group. By the theorem \ref{Pet1} we know that $\mathcal T_l $ is the closure of the torsion subgroup of $\mathcal G_{K,l}^{ab}$. Note that $\mathcal T_l$ is a closed subgroup and hence the quotient is also pro-$l$ group. Taking the quotient of the sequence \ref{esnl} by $\mathcal T_l$ we obtain:
$$0\to {\mathbb Z_l}^2 \to \mathcal G_{K,l}^{ab}/\mathcal T_l \to \CL_{l}(K) \to 0.$$ 
 Since $\mathbb Z_l$ is torsion free we have $(\mathcal G_{K,l}^{ab}/\mathcal T_l)[\tor]$ maps invectively to $\CL_{l}(K)$ which is finite. Denoting the group $\mathcal G_{K,l}^{ab}/\mathcal T_l$ by $B_l$ we get isomorphism of topological groups\footnote{This is true because $B_l[\tor]$ is finite.}: $B_l \simeq B_l[\tor]\oplus B'_l $, where $B'_l$ denotes the non-torsion part of $B_l$. Since $\mathbb Z_l$ is torsion free we also have the following exact sequence: 
$$0\to {\mathbb Z_l}^2 \to B_l' \to \CL_{l}(K)/\phi(B_l[\tor]) \to 0 .$$
 Since $B'_l$ is torsion free this exact sequence implies $B'_l$ is a free $\mathbb Z_l$-module of rank two and hence $B'_l \simeq \mathbb Z_l^2$. 

Let us denote the quotient map $\mathcal G_{K,l}^{ab} \to \CL_{l}(K)$ by $\phi$. In notations from the introduction $\phi(B_l[\tor])  = \CL^{split}(K)$. Consider the pre-image $\mathcal D_l    \subset \mathcal G_{K,l}^{ab}$ of the group $\phi(B_l[\tor]) \subset \CL_{l}(K)$. Note that $\mathcal D_l$ is closed subgroup and we have the following exact sequence: 

$$0\to \mathcal T_l \to \mathcal D_l \to \phi(B_l[\tor]) \to 0 .$$
 
 Summing up we have the following commutative diagram of pro-$l$ abelian groups: 
\[\xymatrix{
& 0  &  0        & 0  &\\
0 \ar[r] & \mathbb Z_l^{2} \ar[u] \ar[r]         & B'_l \ar[u]\ar[r]  			&\CL_{l}(K) /\phi(B_l[\tor])  \ar[u]\ar[r]  & 0\\
0 \ar[r] & \mathcal T_{l} \times \mathbb Z_l^{2}\ar[u] \ar[r]          &\mathcal G_{K,l}^{ab} \ar[u] \ar[r]^{\phi}    			        &\CL_{l}(K) \ar[u] \ar[r] &0 \\ 
0 \ar[r] & \mathcal T_{l}\ar[r]\ar[u]                   & \mathcal D_l \ar[r] \ar[u]			        & \phi(B_l[\tor])\ar[r] \ar[u]  &0\\
& 0 \ar[u] &  0  \ar[u]        & 0 \ar[u] &\\
}\]

Now consider the exact sequence coming from the medium column of the above diagram: 

$$ 0\to \mathcal D_l \to \mathcal G_{K,l}^{ab} \to B'_l \to 0 .$$

We know that $B'_l \simeq \mathbb Z_l^{2}$, but $\mathbb Z_l$ is a projective module and hence we could split this sequence to obtain isomorphism: $ \mathcal G_{K,l}^{ab} \simeq \mathcal D_l \times B'_l \simeq \mathcal D_l \times \mathbb Z_l^2$, which finishes the first step. 

\subsection{Proof of Uniqueness} 
Our main result is to show that the group $\mathcal D_l$ is determined uniquely by the isomorphism type of $\phi(B_l[\tor]) = \CL^{split}(K)$. Consider the exact sequence: 
$$ 0 \to \mathcal T_l \to \mathcal D_l \to  \CL^{split}(K) \to 0$$
We know that the closure of the torsion subgroup of $\mathcal G_{K,l}^{ab}$ is $\mathcal T_l$ therefore, $\mathcal D_l$ contains no torsion elements apart from elements of $\mathcal T_l$. 
Our goal is to prove:

\begin{theorem}\label{progroup}
Given pro-$l$ abelian group $\mathcal T_l \simeq \prod_{k\in \mathbb N} \mathbb Z/l^{k}\mathbb Z$ and a finite abelian $l$-group $A$ there exists a unique isomorphism type of pro-$l$ abelian group $\mathcal D_l$ such that the following holds:
\begin{enumerate}
	\item There exists an exact sequence of pro-$l$ abelian groups: $ 0 \to \mathcal T_l \to \mathcal D_l \to  A \to 0$;
	\item The topological closure of the torsion subgroup of $\mathcal D_l$ is $\mathcal T_l$: $ \overbar{ (\mathcal D_l)[\tor]} = \mathcal T_l $. 
\end{enumerate}
\end{theorem}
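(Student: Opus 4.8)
The plan is to pass to Pontryagin duals, where the problem becomes one about countable discrete abelian $l$-groups and can be settled by the classical Ulm--Zippin classification. Write $M^{\vee}=\Hom_{\mathrm{cts}}(M,\mathbb Q_l/\mathbb Z_l)$ for the dual; this is an anti-equivalence between pro-$l$ abelian groups and discrete abelian $l$-torsion groups, and it is a bijection on isomorphism classes, so it suffices to classify $D:=\mathcal D_l^{\vee}$. Since $\mathcal T_l=\prod_{k}\mathbb Z/l^k\mathbb Z$ we have $\mathcal T_l^{\vee}=T:=\bigoplus_{k\ge 1}\mathbb Z/l^k\mathbb Z$, a countable reduced $l$-group, while the finite group $A$ has finite dual $A^{\vee}\cong A$. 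Dualizing the sequence in (1) turns it into $0\to A^{\vee}\to D\to T\to 0$, so $D$ is a countable abelian $l$-group, and it is reduced because an extension of the reduced group $T$ by the finite group $A^{\vee}$ has no nonzero divisible subgroup. Hence Ulm's theorem applies: $D$ is determined up to isomorphism by its Ulm invariants $u_{\alpha}(D)=\dim_{\mathbb F_l}(l^{\alpha}D)[l]/(l^{\alpha+1}D)[l]$.

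The heart of the argument is to translate condition (2) into a statement about heights in $D$. Using the perfect pairing between $\mathcal D_l$ and $D$ together with the adjunction $\langle l^n g,m\rangle=\langle g,l^n m\rangle$, one obtains for every $n$ the identity $(l^n D)^{\perp}=\mathcal D_l[l^n]$ of subgroups of $\mathcal D_l$; passing to the limit over $n$ yields
\begin{equation*}
(l^{\omega}D)^{\perp}=\Big(\bigcap_n l^n D\Big)^{\perp}=\overbar{\textstyle\sum_n \mathcal D_l[l^n]}=\overbar{\mathcal D_l[\tor]} ,
\end{equation*}
where $l^{\omega}D=\bigcap_n l^n D$ is the first Ulm subgroup, i.e. the subgroup of elements of infinite height. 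Condition (2) reads $\overbar{\mathcal D_l[\tor]}=\mathcal T_l=(A^{\vee})^{\perp}$, the last equality expressing that $\mathcal T_l$ is the image of $T^{\vee}$ in $\mathcal D_l$, hence the annihilator of $\ker(D\to T)=A^{\vee}$. Taking annihilators, condition (2) is therefore \emph{equivalent} to $l^{\omega}D=A^{\vee}=\ker(D\to T)$, so in particular $D/l^{\omega}D\cong T$. (That $l^{\omega}D\subseteq A^{\vee}$ holds automatically, since $T$ is a direct sum of cyclics and so $l^{\omega}T=0$.)

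Now I can read off all Ulm invariants of $D$ from $A$ alone. For $\alpha=\omega+j$ we have $l^{\omega+j}D=l^{j}(l^{\omega}D)=l^{j}A^{\vee}$, so $u_{\omega+j}(D)=u_{j}(A^{\vee})$ is the $j$-th Ulm invariant of the finite group $A^{\vee}\cong A$, that is, the number of cyclic summands of $A$ of order $l^{j+1}$. For finite $n$ one has $u_n(D)=u_n(D/l^{\omega}D)$ because $l^{\omega}D$ contains no elements of finite height; since $D/l^{\omega}D\cong T$ and $T=\bigoplus_k\mathbb Z/l^k\mathbb Z$ has exactly one summand of each order, this gives $u_n(D)=u_n(T)=1$ for every $n\ge 0$. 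Thus the entire Ulm sequence of $D$ depends only on the isomorphism type of $A$, and Ulm's uniqueness theorem shows that $D$, and therefore $\mathcal D_l$, is unique up to isomorphism. For existence it suffices, by Zippin's realization theorem, to produce a countable reduced $l$-group with these invariants; alternatively one writes $A=\bigoplus_i\mathbb Z/l^{a_i}\mathbb Z$ and realizes $\mathcal D_l$ explicitly as the push-out of $\mathcal T_l\xleftarrow{\iota}\mathbb Z_l^{r}\xrightarrow{\oplus_i l^{a_i}}\mathbb Z_l^{r}$, where $\iota$ sends the $i$-th basis vector to an infinite-order element such as $(1,1,1,\dots)$ inside a dedicated infinite sub-product of cyclic factors, and then checks directly that all torsion of the push-out lands in $\mathcal T_l$.

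The main obstacle is the faithful translation of condition (2): establishing the annihilator identity $(l^{\omega}D)^{\perp}=\overbar{\mathcal D_l[\tor]}$, and the related point that the finite Ulm invariants are unchanged upon dividing out the infinite-height subgroup $l^{\omega}D$. Once this dictionary between the torsion filtration of $\mathcal D_l$ and the height filtration of its dual is in place, both existence and uniqueness follow immediately from the Ulm--Zippin classification. A secondary point to verify is that every group in sight is genuinely countable and reduced, so that the classification theorem is applicable.
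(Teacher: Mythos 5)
Your proof is correct, and it shares the paper's first move --- passing to Pontryagin duals to turn the problem into one about countable discrete torsion $l$-groups --- but it then diverges in how the dual problem is solved. The paper translates condition (2) via an explicit diagram chase with the multiplication-by-$l^n$ maps, showing that every nonzero element of $A^{\vee}$ is infinitely $l$-divisible in $\mathcal D_l^{\vee}$, and then quotes a bespoke classification result (Theorem \ref{group}, proved in \cite{FFAb}) asserting uniqueness of a torsion group $B$ with $0\to A\to B\to \oplus_i C_i\to 0$ and $A=\cap_n nB$. You instead encode condition (2) as the single annihilator identity $(l^{\omega}D)^{\perp}=\overbar{\mathcal D_l[\tor]}$, conclude that condition (2) is equivalent to $l^{\omega}D=A^{\vee}$, and then compute the full (transfinite) Ulm sequence of $D$ --- $u_n(D)=1$ for finite $n$ and $u_{\omega+j}(D)=u_j(A)$ --- so that uniqueness and existence follow from the classical Ulm and Zippin theorems. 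The two arguments have the same mathematical core (the paper's hypothesis $A=\cap_n nB$ is precisely your statement $l^{\omega}D=A^{\vee}$, and the paper's lemma is the annihilator computation in disguise), but yours replaces the citation of the companion preprint by the standard Ulm--Zippin classification and makes the complete list of invariants of $\mathcal D_l$ explicit, which is arguably more transparent; the paper's route keeps the argument elementary and self-referential to \cite{FFAb}, avoiding transfinite height machinery. The only places where your write-up leans on facts you should at least cite are the lemma that the finite Ulm invariants of $D$ agree with those of $D/l^{\omega}D$ (a short computation: given $x\in l^nD$ with $lx\in l^{\omega}D$, write $lx=l^{n+2}z$ and replace $x$ by $x-l^{n+1}z\in (l^nD)[l]$), and, in the push-out construction, the verification that the torsion of $\mathcal T_l$ is dense in $\mathcal T_l$ so that condition (2) holds with equality rather than mere containment.
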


The key idea in the proof is to use the Pontryagin duality for locally compact abelian groups to reduce the question about pro-$l$ groups to the more elementary question about discrete torsion groups and then use the following theorem:

\begin{theorem}\label{group}
Let $\{ C_i \}$ be a countable set of finite cyclic abelian $l$-groups with orders of $C_i$ are not bounded as $i$ tends to infinity and let $A$ be any finite abelian $l$-group.
Then up to isomorphism there exists a unique torsion abelian $l$-group $B$ satisfying two following conditions:
\begin{enumerate}
	\item There exists an exact sequence: $0 \to A \to B \to \oplus_{i \ge 1} C_i \to 0$;
	\item $A$ is the union of all divisible elements of $B$: $A = \cap_{n \ge 1} nB$.
\end{enumerate} 
\end{theorem}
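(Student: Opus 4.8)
The plan is to deduce both halves of Theorem \ref{group} from the Ulm--Kaplansky classification of countable reduced abelian $l$-groups. Write $Q=\bigoplus_{i\ge 1}C_i$. First I would observe that any $B$ satisfying the two conditions is automatically \emph{reduced} and \emph{countable}: its maximal divisible subgroup lies in $\bigcap_{n\ge 1}nB=A$, which is finite and therefore has no nonzero divisible part, while $B$ is an extension of the countable group $Q$ by the finite group $A$. Hence Ulm's theorem applies and the isomorphism type of $B$ is pinned down by the Ulm invariants $f_\alpha(B)=\dim_{\mathbb F_l}\big((l^\alpha B)[l]/(l^{\alpha+1}B)[l]\big)$, where $l^\alpha B$ is the $\alpha$-th Ulm subgroup. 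The whole problem thus becomes: show that conditions (1) and (2) force each $f_\alpha(B)$ to depend only on $A$ and $Q$.

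The invariants at ordinals $\ge\omega$ are immediate from condition (2): it says precisely that the first Ulm subgroup is $l^\omega B=\bigcap_n l^nB=A$, so $l^{\omega+k}B=l^kA$ and $f_{\omega+k}(B)=f_k(A)$ for all $k\ge 0$, a finite list terminating because $A$ is finite. The heart of the matter is the range $n<\omega$, where I claim $f_n(B)=f_n(Q)$ for every $n$, \emph{independently of the extension class}, and where condition (2) is indispensable. For cyclic $A$ the mechanism is transparent: after a change of basis of $Q$ one may assume that in each order-level at most one cyclic summand of $Q$ is grafted onto the socle generator of $A$, and condition (2) says exactly that the set $S$ of levels at which grafting occurs is infinite. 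Splitting off the ungrafted (bounded, hence pure) cyclic summands decomposes $B$ as their direct sum together with a single ``tower'' in which $A$ sits at infinite height. Removing a summand of order $l^{n+1}$ from the split part deletes one socle generator of height $n$, while the tower supplies exactly one new socle generator at that same height $n$ (a difference of two consecutive tower generators); the two effects cancel, so $f_n(B)=f_n(Q)$. For general finite $A$ I would run the same bookkeeping after partitioning an infinite, order-unbounded subfamily of the $C_i$ into finitely many such subfamilies, one per cyclic summand of $A$.

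For existence I prefer an explicit construction to invoking Zippin's realization theorem. The hypothesis that the orders of the $C_i$ are unbounded means that for every $N$ infinitely many $C_i$ have order exceeding $l^N$; this lets me split an infinite subfamily into $r=\dim_{\mathbb F_l}A[l]$ infinite subfamilies of unbounded order. Writing $A=\bigoplus_{k=1}^r\langle a_k\rangle$ with $a_k$ of order $l^{t_k}$, and $C_i=\langle u_i\rangle$ of order $l^{e_i}$, I would form the pushout $B$ obtained from $A\oplus\bigoplus_i\mathbb Z\,\tilde u_i$ by imposing $l^{e_i}\tilde u_i=0$ for ungrafted $i$ and $l^{e_i}\tilde u_i=a_k$ for $i$ in the $k$-th subfamily (so that $\tilde u_i$ has order $l^{e_i+t_k}$). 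By construction $B/A\cong Q$, and each $a_k$ lies in $l^{e_i}B$ for arbitrarily large $e_i$, whence $A\subseteq l^\omega B$; since $Q$ is separable the reverse inclusion $l^\omega B\subseteq A$ is automatic, giving $l^\omega B=A$, i.e. condition (2). This $B$ satisfies (1) and (2), and by the invariant computation it is the only one up to isomorphism.

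The main obstacle is precisely the finite-height identity $f_n(B)=f_n(Q)$: this is where condition (2) is used and where the conclusion would break down without the unboundedness hypothesis (if the orders were bounded, no nonzero $A$ could sit at infinite height and no such $B$ would exist). The cleanest rigorous route I foresee is to track the socle through the exact sequence $0\to A\to l^nB\to l^nQ\to 0$, valid for finite $n$ because $A\subseteq l^\omega B\subseteq l^nB$, and to apply the snake lemma for multiplication by $l$ to identify exactly which socle elements of $Q$ fail to lift to $B$; condition (2) forces this failure to be accounted for entirely by the Ulm invariant $f_\omega$ carried by $A$, leaving the finite invariants equal to those of $Q$. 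Once all $f_\alpha(B)$ are determined, Ulm's theorem simultaneously yields uniqueness and confirms that the group constructed above is the required one.
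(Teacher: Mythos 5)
The paper does not actually prove Theorem \ref{group}: its ``proof'' is a citation to \cite{FFAb}, so there is no in-paper argument to compare yours against. Judged on its own, your plan is sound and I could not find a genuine gap. The reductions are all correct: any $B$ satisfying (1) and (2) is countable, torsion, primary and reduced (its divisible part sits inside the finite group $A$), so Ulm's theorem applies; condition (2) identifies $l^\omega B$ with $A$, giving $f_{\omega+k}(B)=f_k(A)$; and the crucial finite-height identity $f_n(B)=f_n(Q)$ does follow by the route you name in your last paragraph. Concretely: since $A\subseteq l^{n+1}B$, the map $A/lA\to l^nB/l^{n+1}B$ in the snake-lemma sequence for multiplication by $l$ on $0\to A\to l^nB\to l^nQ\to 0$ is zero, so every connecting map $\delta_n\colon (l^nQ)[l]\to A/lA$ is surjective; writing $K_n=\ker\delta_n$ one then gets $(l^nQ)[l]=K_n+(l^{n+1}Q)[l]$ and $K_{n+1}=K_n\cap(l^{n+1}Q)[l]$, whence $K_n/K_{n+1}\cong (l^nQ)[l]/(l^{n+1}Q)[l]$ and $f_n(B)=f_n(Q)$ even when these invariants are infinite. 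Your explicit pushout construction for existence is also correct (injectivity of $A\to B$ and $l^\omega B=A$ check out, and unboundedness of the orders is exactly what lets each generator of $A$ acquire infinite height). Two small remarks: the ``grafting/change of basis'' paragraph for cyclic $A$ is the one genuinely hand-wavy spot --- classifying the extension class and splitting off pure summands needs more care than stated --- but it is superseded by the snake-lemma argument, so nothing is lost; and you silently use that finite heights are preserved by $B\to Q$ (true because $A\subseteq l^nB$ forces $l^nB+A=l^nB$), which is worth recording. This Ulm-invariant approach is a clean, standard route; whether it coincides with the argument of \cite{FFAb} cannot be determined from the present paper.
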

\begin{proof}
See \cite{FFAb}.
\end{proof}

We will show that the Pontryagin dual of the exact sequence $ 0 \to \mathcal T_l \to \mathcal D_l \to  \CL^{split}(K) \to 0$ satisfies conditions of the theorem \ref{group} and therefore $\mathcal D_{l}$ is uniquely determined, since its Pontryagin dual $(D_l)^{\vee}$ is uniquely determined.

\subsubsection{The Pontryagin duality}
We need to recall some properties of the Pontryagin duality for locally compact abelian groups. A good reference including some historical discussion is \cite{Pont}. Let $\T$ be the topological group $\mathbb R / \mathbb Z$ given with the quotient topology. If $A$ is any locally compact abelian group one consider the Pontryagin dual $A^{\vee}$ of $A$ which is the group of all continuous homomorphism from $A$ to $\T$ : $$ A^{\vee} = \Hom(A, \T). $$ 
This group has the so-called compact-open topology and is a topological group. Here we list some properties of the Pontryagin duality we use during the proof: 
 \begin{enumerate}
 	\item The Pontryagin duality is a contra-variant functor from the category of locally compact abelian groups to itself;
	\item If $A$ is a finite abelian group treated with the discrete topology then $A^{\vee} \simeq A$ non-canonically;
	\item We have the canonical isomorphism: $(A^{\vee})^{\vee} \simeq A$;
	\item The Pontryagin dual to the pro-finite abelian group $A$ is a discrete discrete torsion group and vice versa;
	\item The Pontryagin duality sends direct products to direct sums and vice versa;
\end{enumerate}
Having stated these facts we are able to finish the proof.
\subsubsection{The final step}

In a settings of the theorem \ref{progroup} the multiplication by $l^n$ map induces the following commutative diagram:

\[\xymatrix{
0 \ar[r] & \mathcal T_{l}[l^{n}] \ar@{^(->}[d] \ar@{^(->>}[r]         & \mathcal D_{l}[l^{n}]\ar@{^(->}[d]\ar[r]^{0}  			&A[l^n]\ar@{^(->}[d]  &\\
0 \ar[r] & \mathcal T_{l}\ar[d]^{l^n} \ar[r]          &\mathcal D_{l}\ar[d]^{l^n} \ar[r]    			        &A\ar[d]^{l^n} \ar[r] &0 \\ 
0 \ar[r] & \mathcal T_{l}\ar[r]\ar@{->>}[d]                   &\mathcal D_{l} \ar[r] \ar@{->>}[d]			        & A \ar[r] \ar@{->>}[d]  &0\\
 & \mathcal T_{l}/l^n\mathcal T_{l}\ar[r] & \mathcal D_{l} /l^n \mathcal D_{l}  \ar[r]        &A/l^n A \ar[r] &0\\
}\]

Since any torsion element $x$ of $\mathcal D_l$ is in $\mathcal T_l$ the map from $\mathcal D_l[l^{n}]$ to $A[l^n]$ is the zero map and the map from $\mathcal T_{l}[l^{n}]$ to $ \mathcal D_l[l^{n}]$ is an isomorphism. Now applying the Pontryagin duality to the above diagram we get: 
\[\xymatrix{
0  & (\mathcal T_{l}[l^{n}] )^{\vee} \ar[l]          & (\mathcal D_{l}[l^{n}] )^{\vee}\ar@{^(->>}[l]   			&(A[l^n])^{\vee} \ar[l]^{0} &  \\
0  & (\mathcal T_{l})^{\vee}\ar@{->>}[u] \ar[l]          &(\mathcal D_{l})^{\vee}\ar@{->>}[u] \ar[l]    			        &(A)^{\vee}\ar@{->>}[u] \ar[l] &0\ar[l] \\ 
0  & (\mathcal T_{l})^{\vee}\ar[l]\ar[u]^{l^n}                   & (\mathcal D_{l})^{\vee} \ar[u]^{l^n} \ar[l]			        & (A)^{\vee}\ar[l] \ar[u]^{l^n}  &0\ar[l]\\
  & (\mathcal T_{l}/l^n\mathcal T_{l})^{\vee} \ar@{^(->}[u] & (\mathcal D_{l}/l^n \mathcal D_{l})^{\vee}\ar@{^(->}[u] \ar[l]        &(A/l^n A)^{\vee}\ar@{^(->}[u]\ar[l] &0\ar[l]\\
}\]

Note that $(\mathcal T_{l})^{\vee}$ is isomorphic to the direct sum of cyclic groups $(\mathcal T_{l})^{\vee} \simeq \oplus_{k \in \mathbb N} \mathbb Z/l^{k}\mathbb Z$ and therefore $\cap_n l^{n} (\mathcal T_{l})^{\vee} = \{0 \}$. It means we have $(\cap_n l^{n} (\mathcal D_l)^{\vee}) \subset (A)^{\vee}$. Our goal is to show that $(\cap_n l^{n} (\mathcal D_l)^{\vee}) = (A)^{\vee}$.
\begin{lem}
Given any non-zero element $x$ of $ (A)^{\vee} \subset (\mathcal D_l)^{\vee}$ and any natural number $n$ there exists an element $c_x \in (\mathcal D_l)^{\vee}$ such that $l^n c_x = x$. 
\end{lem}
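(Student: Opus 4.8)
The plan is to read the divisibility of $x$ directly off the Pontryagin dual of the top row of the displayed diagram. First I would dualize the short exact sequence $0 \to \mathcal T_l \to \mathcal D_l \to A \to 0$. Since Pontryagin duality is an exact contravariant functor carrying pro-finite groups to discrete torsion groups, this yields an exact sequence of discrete torsion $l$-groups $0 \to A^\vee \to (\mathcal D_l)^\vee \xrightarrow{p} (\mathcal T_l)^\vee \to 0$, in which $A^\vee$ is exactly $\ker p$. Here $(\mathcal T_l)^\vee \simeq \oplus_{k} \mathbb Z/l^k\mathbb Z$, so $\cap_n l^n (\mathcal T_l)^\vee = \{0\}$, which is why the whole problem collapses onto $A^\vee$.

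Next I would use the hypothesis that every torsion element of $\mathcal D_l$ already lies in $\mathcal T_l$. For each $n$ this means $\mathcal D_l[l^n] = \mathcal T_l[l^n]$, so the inclusion $\mathcal T_l[l^n] \hookrightarrow \mathcal D_l[l^n]$ is an isomorphism; this is precisely the top row of the first diagram. Invoking the canonical identifications $(G[l^n])^\vee \cong G^\vee/l^n G^\vee$, obtained by dualizing the four-term sequence attached to multiplication by $l^n$ on $G$, the dual of this isomorphism becomes a map $(\mathcal D_l)^\vee/l^n(\mathcal D_l)^\vee \to (\mathcal T_l)^\vee/l^n(\mathcal T_l)^\vee$. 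By naturality of the identification in $G$, this is the reduction $\bar p$ of $p$ modulo $l^n$, and since the inclusion we dualized was an isomorphism, $\bar p$ is an isomorphism too.

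Finally I would chase the commutative square relating $p$, $\bar p$, and the two quotient maps. For the given $x \in A^\vee \subset (\mathcal D_l)^\vee$ we have $p(x)=0$ because $A^\vee = \ker p$, so the image of $x$ in $(\mathcal T_l)^\vee/l^n(\mathcal T_l)^\vee$ is zero. As $\bar p$ is an isomorphism, the image of $x$ in $(\mathcal D_l)^\vee/l^n(\mathcal D_l)^\vee$ is zero as well, i.e. $x \in l^n (\mathcal D_l)^\vee$. Hence there exists $c_x$ with $l^n c_x = x$, which is the assertion of the lemma; and since $n$ was arbitrary this simultaneously establishes $A^\vee \subseteq \cap_n l^n(\mathcal D_l)^\vee$, the reverse inclusion needed to identify $A^\vee$ with the divisible part and so to apply Theorem \ref{group}.

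I expect the only genuinely delicate point to be the functoriality used in the second step: one must verify that the isomorphism $(G[l^n])^\vee \cong G^\vee/l^n G^\vee$ is natural in $G$, so that dualizing $\mathcal T_l[l^n]\hookrightarrow\mathcal D_l[l^n]$ really produces the reduction of $p$ and not some unrelated map. This is a routine but essential diagram chase, and it is exactly what the large dualized diagram in the text encodes; once it is in hand, the divisibility of $x$ is immediate.
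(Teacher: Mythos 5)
Your argument is correct and is essentially the paper's own proof: both rest on the observation that $\mathcal D_l[l^n]=\mathcal T_l[l^n]$ dualizes to an isomorphism $(\mathcal D_l)^{\vee}/l^n(\mathcal D_l)^{\vee}\simeq(\mathcal T_l)^{\vee}/l^n(\mathcal T_l)^{\vee}$ (the paper writes these cokernels as $(\mathcal D_l[l^n])^{\vee}\simeq(\mathcal T_l[l^n])^{\vee}$ and uses exactness of the dualized multiplication-by-$l^n$ column, which is the same identification), after which $x\in\ker\bigl((\mathcal D_l)^{\vee}\to(\mathcal T_l)^{\vee}\bigr)$ forces $x\in l^n(\mathcal D_l)^{\vee}$ by the same diagram chase. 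The naturality point you flag is exactly what the paper's large dualized diagram encodes, so there is no gap.
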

\begin{proof}
For fixed $n$ consider the above diagram. Since the second row is exact the image of $x$ in $(\mathcal T_{l})^{\vee}$ is zero. Then its image in $\mathcal (\mathcal T_{l}[l^{n}] )^{\vee}$ is also zero. Since $(\mathcal T_{l}[l^{n}] )^{\vee}  \simeq (\mathcal D_l[l^{n}] )^{\vee}$ it means that image of the non-zero element $x$ in $(\mathcal D_l[l^{n}] )^{\vee}$ is zero. Since the second column is exact this means that $x$ lies in the image of the multiplication by $l^n$ map from $(\mathcal D_l)^{\vee} $ to $(\mathcal D_l)^{\vee} $ and therefore there exists $c_x$ such that $l^{n} c_x = x$.
\end{proof}

It means that we have proved:
\begin{cor}\label{cor1}
The exact sequence $0\leftarrow (\mathcal T_{l})^{\vee} \leftarrow (\mathcal D_l)^{\vee} \leftarrow (A)^{\vee} \leftarrow 0$ satisfies conditions of the theorem \ref{group}.
\end{cor}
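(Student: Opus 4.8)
The plan is to dualize the defining extension $0 \to \mathcal T_l \to \mathcal D_l \to A \to 0$ and then check, one at a time, that the resulting sequence meets the two hypotheses of Theorem \ref{group}. First I would fix the dictionary between the two statements: the finite group in the hypothesis of Theorem \ref{group} is to be $(A)^{\vee}$, the unknown torsion group $B$ is to be $(\mathcal D_l)^{\vee}$, and the direct sum $\oplus_i C_i$ is to be $(\mathcal T_l)^{\vee}$. The structural hypotheses are then immediate from the properties of Pontryagin duality recalled above: since $A$ is a finite abelian $l$-group, $(A)^{\vee}\simeq A$ is again finite; since $\mathcal D_l$ is pro-$l$, its dual $(\mathcal D_l)^{\vee}$ is a discrete torsion $l$-group; and $(\mathcal T_l)^{\vee}\simeq \oplus_{k\in\mathbb N}\mathbb Z/l^{k}\mathbb Z$ is exactly a countable direct sum of finite cyclic $l$-groups whose orders $l^{k}$ are unbounded, as required of the family $\{C_i\}$.

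For condition (1) I would simply invoke exactness of the duality functor. Pontryagin duality is a contravariant exact functor on locally compact abelian groups, so applying it to $0 \to \mathcal T_l \to \mathcal D_l \to A \to 0$ yields the exact sequence $0 \to (A)^{\vee} \to (\mathcal D_l)^{\vee} \to (\mathcal T_l)^{\vee} \to 0$, which is precisely the sequence in the statement read from right to left. This is the short step.

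Condition (2) asks that $(A)^{\vee}$ be exactly the subgroup of infinitely divisible elements, i.e. $\cap_{n\ge 1} l^{n}(\mathcal D_l)^{\vee} = (A)^{\vee}$, and this is where the content lies. I would prove the two inclusions separately. The inclusion $\cap_{n} l^{n}(\mathcal D_l)^{\vee}\subseteq (A)^{\vee}$ is the easy direction: any infinitely $l$-divisible $y$ maps, under the surjection $(\mathcal D_l)^{\vee}\twoheadrightarrow(\mathcal T_l)^{\vee}$, into $\cap_n l^n (\mathcal T_l)^{\vee}$, and since $(\mathcal T_l)^{\vee}\simeq\oplus_k \mathbb Z/l^k\mathbb Z$ has trivial divisible part this image is $0$, so exactness of (1) places $y$ in $(A)^{\vee}$. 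The reverse inclusion $(A)^{\vee}\subseteq\cap_{n} l^{n}(\mathcal D_l)^{\vee}$ is exactly the assertion of the Lemma, which I may now quote: for every nonzero $x\in (A)^{\vee}$ and every $n$ there is a $c_x$ with $l^{n}c_x = x$, so $x\in l^{n}(\mathcal D_l)^{\vee}$ for all $n$. Combining the two inclusions gives the desired equality, and the corollary follows.

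The main obstacle is the reverse inclusion, and it is the one already settled by the Lemma. Its proof rests on the single nontrivial input that $\mathcal D_l$ carries no torsion outside $\mathcal T_l$ --- equivalently, that the natural map $\mathcal T_l[l^{n}]\to\mathcal D_l[l^{n}]$ is an isomorphism for every $n$, so that after dualizing the class of $x$ in $(\mathcal D_l[l^{n}])^{\vee}$ already vanishes and $x$ can be pulled back through multiplication by $l^{n}$. Everything else in the corollary is formal: the dictionary, the exactness of duality, and the vanishing of the divisible part of $\oplus_k \mathbb Z/l^k\mathbb Z$.
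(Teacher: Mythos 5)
Your proposal is correct and follows essentially the same route as the paper: dualize the extension, get condition (1) from exactness of Pontryagin duality, obtain the easy inclusion $\cap_n l^n(\mathcal D_l)^{\vee}\subseteq (A)^{\vee}$ from the vanishing of the divisible part of $(\mathcal T_l)^{\vee}\simeq\oplus_k\mathbb Z/l^k\mathbb Z$, and obtain the reverse inclusion from the Lemma, whose key input is that $\mathcal T_l[l^n]\to\mathcal D_l[l^n]$ is an isomorphism. No discrepancies to report.
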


and therefore $\mathcal D_l$ is uniquely determined since its Pontryagin dual $(\mathcal D_l)^{\vee}$ is uniquely determined by the theorem \ref{group}.
 
\newpage

\bibliography{mybib}{}
\bibliographystyle{plain}

\newpage

\tableofcontents

\end{document}